\newtheorem{theorem}{Theorem}[section]
\newtheorem{thm}[theorem]{Theorem}
\newtheorem{prop}[theorem]{Proposition}
\newtheorem{cor}[theorem]{Corollary}
\newtheorem{lem}[theorem]{Lemma}
\theoremstyle{definition}
\newtheorem{definition}[theorem]{Definition}
\newtheorem{example}[theorem]{Example}
\newcommand{\kk}{\Bbbk}
\newcommand{\ol}{\overline}
\newcommand{\vl}{\;\vert\;}
\newcommand{\n}{\mathfrak{n}}
\newcommand{\lst}[2]{#1_1,#1_2,\dots,#1_{#2}}
\newcommand{\defi}[1]{\textsf{#1}}
\newcommand{\supp}{\text{Supp}} 
\newcommand{\prun}[2]{\mathscr{I}(#1, #2)}
\newcommand{\V}{V\setminus\{v\}}
\DeclareMathOperator{\neighbors}{N}
\tikzstyle myBG=[line width=1pt,opacity=1]
\newcommand{\edge}[4]
{
  \draw[white,myBG]  (#1, #2) -- (#3, #4);
  \draw[black,thick] (#1,#2) -- (#3,#4);

  \draw[fill=black] (#1,#2) circle (2pt);
  \draw[fill = black] (#3,#4) circle (2pt);
}
\newcommand{\drawPolarLinewithBG}[2]
{
  \draw[white,myBG]  (#1) -- (#2);
  \draw[black, thick] (#1) -- (#2);
}
\title[Classifying NCI]{Classifying Nearly Complete Intersection Ideals Generated in Degree Two}
\author[C. Miller]{Charlie Miller}
\thanks{The first author was supported by McGowan Family Fund, a summer research award for students at Hamilton College, Clinton, NY}
\email{charles.miller1015@gmail.com}
\author[B. Stone]{Branden Stone}
\email{stonebranden@gmail.com}
\begin{document}

\begin{abstract}
	Nearly complete intersection ideals were introduced in \cite{Boocher} and defines a special class of monomial ideals in a polynomial ring. These ideals were used to give a lower bound of the total sum of betti numbers that appear a minimal free resolution of a monomial ideal. In this note we give a graph theoretic classification of nearly complete intersection ideals generated in degree two. In doing so, we define a novel graph operation (the inversion) that is motivated by the definition of this new class of ideals. 
\end{abstract}

\maketitle

\section{Introduction}

Let $I$ be a homogeneous ideal in a polynomial ring $R$ over a field $\kk$. We denote the rank of the $i$-th free module in a minimal free resolution of $R/I$ as $\beta_i(R/I)$. The long-standing conjecture of Buchsbaum-Eisenbud \cite{buchsbaum} and Horrocks \cite{hartshorne} states that if $I$ has height $c$, then 
\[
	\beta_i(R/I) \geqslant \binom c i.
\]
While the case when $c \geqslant 5$ is still open, the weaker statement
\[
	\sum \beta_i(R/I) \geqslant 2^c,
\]
known as the {\it Total Rank Conjecture}, has been completely solved for arbitrary ideals (with char $k \not= 2$) by M. Walker \cite{Walker2017}. At the same time, a special case of this conjecture was independently shown for monomial ideals by A. Boocher and J. Seiner \cite{Boocher}. In particular, they show that if $I$ is not a complete intersection, then 
\[
	\sum \beta_i(R/I) \geqslant 2^c + 2^{c-1}. 
\]
In order to achieve this lower bound, the authors reduce to a special class of ideals they define as nearly complete intersections (NCI) (see Definition~\ref{def:nci-ideal}).  Our main theorem, Theorem~\ref{thm:main}, gives a complete characterization of NCI ideals generated in degree 2 by examining the associated graph $G$.  For example, a squarefree monomial ideal $I$ generated in degree 2 is \emph{not} a nearly complete intersection if $P_5$ is an induced subgraph of $G$. Section~\ref{sec:prelim} gives the necessary background information. The main classification theorem is proved in Section~\ref{sec:classify} as well as a new graph operation, the {\it inversion} of a vertex (Definition~\ref{def:inversion}), motivated by the definition of this new class of ideals. \bigskip

\paragraph{\bf Acknowledgments} This project is the result of undergraduate summer research supported by Hamilton College. We are thankful to the faculty in the Math department for their encouragement throughout the process. In particular, we are grateful to Courtney Gibbons for her continual guidance and feedback. The first author would also like to thank the organizers and speakers at the Thematic Program in Commutative Algebra and its Interaction with Algebraic Geometry held at Notre Dame in the summer of 2019. This workshop introduced the foundational material relevant to this work. 

\section{Preliminaries}\label{sec:prelim}

Unless otherwise noted, we let $R = \kk[\lst x n]$ be a standard graded
polynomial ring over a field $\kk$ in $n$ variables. Given a monomial ideal $I
\subseteq R$, the support of $I$ (denoted $\supp(I)$) will refer to the set of
variables appearing in at least one minimal monomial generator. The following
fact about the support is helpful throughout the note and we state it without
proof.
\begin{lem}\label{lem:support}
	Let $R=\kk[\lst x n]$ and $I = (\lst m t)\subseteq R$ be a monomial ideal. 
	If $m\in R$ is a monomial such that
	\[
		\supp(m) \cap \left[\bigcup^t_{i=1}\supp(m_i)\right] = \varnothing,
	\]
	then $\ol{m}\in R/I$ is a non-zero divisor.
\end{lem}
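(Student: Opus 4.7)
The plan is to show the contrapositive-style statement: if $f \in R$ satisfies $mf \in I$, then $f \in I$ already. Because $I$ is a monomial ideal, membership in $I$ can be tested monomial-by-monomial, which is the main leverage.

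First I would reduce to monomial multipliers $f$. Write $f = \sum_\alpha c_\alpha x^\alpha$ as a $\kk$-linear combination of pairwise distinct monomials. Then $mf = \sum_\alpha c_\alpha\, m x^\alpha$ is also a sum of pairwise distinct monomials (multiplication by the monomial $m$ is injective on the monomial basis). Since $I$ is a monomial ideal, $mf \in I$ is equivalent to $m x^\alpha \in I$ for every $\alpha$ with $c_\alpha \neq 0$. Hence it suffices to prove the implication $m x^\alpha \in I \Rightarrow x^\alpha \in I$ for a single monomial $x^\alpha$.

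Next I would use the divisibility characterization of monomial ideal membership. If $m x^\alpha \in I$, then some generator $m_i$ divides $m x^\alpha$; that is, the exponent of each variable $x_j$ in $m_i$ is at most its exponent in $m x^\alpha$. For every variable $x_j \in \supp(m_i)$, the disjointness hypothesis $\supp(m) \cap \supp(m_i) = \varnothing$ forces $x_j \notin \supp(m)$, so the exponent of $x_j$ in $m x^\alpha$ equals its exponent in $x^\alpha$ alone. Therefore $m_i$ already divides $x^\alpha$, which gives $x^\alpha \in I$ as required.

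Combining the two reductions, $mf \in I$ forces every $x^\alpha$ in the support of $f$ to lie in $I$, hence $f \in I$. This shows $\bar m$ is a non-zero divisor on $R/I$. The only genuinely content-carrying step is the divisibility argument under disjoint support; the reduction to a single monomial is a routine consequence of $I$ being monomial, so I expect no real obstacle.
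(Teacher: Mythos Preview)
Your argument is correct. The reduction to a single monomial term via the monomial basis is valid, and the divisibility step under the disjoint-support hypothesis is exactly what is needed: if $m_i \mid m x^\alpha$ and no variable of $m_i$ appears in $m$, then the exponent of each $x_j \in \supp(m_i)$ in $m x^\alpha$ is contributed entirely by $x^\alpha$, so $m_i \mid x^\alpha$.

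As for comparison with the paper: there is nothing to compare. The paper explicitly states this lemma \emph{without proof}, treating it as a standard fact. Your write-up supplies precisely the routine verification the authors omitted, so in effect you have filled in what they left to the reader rather than offered an alternative to an existing argument.
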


Using notation defined in \cite{Boocher}, $I(x=1)$ is the ideal defined by
setting $x = 1$ for some variable $x$ in the support of $I$. As such, $I
\subseteq I(x=1)$. E.g., if $I = (ab,bc,ac) \subseteq \kk[a,b,c]$, then $I(a=1)
= (b,c)$ and $I \subseteq I(a=1)$. The following was defined in \cite{Boocher}
and is the main object of study in this note.
\begin{definition}[\cite{Boocher}]\label{def:nci-ideal}
	A squarefree monomial ideal $I \subseteq R$ is a \defi{nearly complete
	intersection} if
	\begin{enumerate}
		\item it is generated in degree at least two, 
		\item is not a complete intersection, and
		\item for each variable $x$ in the support of $I$, $I(x=1)$ is a
		complete intersection.
	\end{enumerate}
\end{definition}

\noindent For example, let $I = (ab,ac,bc) \subseteq \kk[a,b,c]$. We see that $I$ is
generated in degree $2$ and is not a complete intersection. Further, for each
element of $\supp(I)$, $I(a=1) = (b,c)$, $I(b=1) = (a,c)$, and $I(c=1) = (a,b)$
are complete intersections.  Thus $I$ is a nearly complete intersection.

The main result, Theorem~\ref{thm:main}, completely classifies the NCIs generated in degree two via
their associated graphs. Throughout, a finite graph $G$ is a pair $G = (V(G), E
(G))$ where $V(G) = \{\lst x n\}$ is the set of vertices of $G$, and $E(G)$ is a
collection edges of $G$ consisting of two element subsets of $V(G)$. We will
further assume all graphs are simple, i.e. not allowing loops and multiple edges
between vertices. 

There exists a one-to-one correspondence between finite simple graphs and
monomial ideals generated degree two. In particular, given a graph $G$, the
{\it edge ideal} $I(G)$ is typically defined by
\[
	I(G) = \left( x_ix_j \vl \{x_i,x_j\} \in E(G) \right) \subseteq \kk[\lst x
	n],
\]
where $V(G) = \{\lst x n\}$. For bookkeeping reasons, we slightly modify the
standard definition of edge ideal to allow for singletons, while at the same
time preserving the one-to-one correspondence. In particular, in
this note the \defi{edge ideal of a graph $G$} is defined as
\[
	I(G) = \left( x_ix_j, x_k \vl \{x_i,x_j\} \in E(G), x_k \in V(G) \text{ is a
	singleton} \right) \subseteq \kk[\lst x n]. 
\]
For example, the graph $G$ below corresponds to the edge ideal $I(G) = (ab, ac,
bc, d) \subseteq \kk[a,b,c,d]$.

\begin{center}
\begin{tikzpicture}[scale=1.5]

\node (a) at (-.5, .5) [circle,fill=black,inner sep=0pt, minimum size=5pt] {};
\node (b) at (.5, -.25) [circle,fill=black,inner sep=0pt, minimum size=5pt] {};
\node (c) at (.75, .7) [circle,fill=black,inner sep=0pt, minimum size=5pt] {};
\node (d) at (1.25, 0) [circle,fill=black,inner sep=0pt, minimum size=5pt] {};

\draw[black, thick] (a) -- (b) -- (c) -- (a);

\node at (-.5, .7) {$a$};
\node at (.5, -.45) {$b$};
\node at (.75, .85) {$c$};
\node at (1.25, .2) {$d$};
\node at (-1.25, 0.25) {$G$:};
\end{tikzpicture}
\end{center}

Abusing notation we will often refer to an element $u_k \in I(G)$ both as
$u_k = x_{i_k}x_{j_k} \in I(G)$ and $u_k = \{x_{i_k}, x_{j_k}\} \in E(G)$. Using
this correspondence, we say a graph $G$ is a \defi{nearly complete intersection 
(NCI)} if the edge ideal $I(G)$ is a nearly complete intersection. As such
classifying the NCI graphs will in turn classify the NCI ideals generated in degree two. 

We end this section with a standard fact about graphs associated to complete
intersections.

\begin{lem}\label{lem:ci}
	Let $G$ be a simple graph and $I(G)\subseteq R$ be its edge ideal. Then
	$R/I$ is a complete intersection if and only if $G$ is a disjoint union of
	edges and singletons.
\end{lem}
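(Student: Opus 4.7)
The plan is to reduce the statement to the standard fact that a monomial ideal is a complete intersection if and only if its minimal monomial generators have pairwise disjoint supports, and then translate this condition into graph-theoretic language.

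First I would prove the easier direction. Assume $G$ is a disjoint union of edges and singletons. Then the minimal generators of $I(G)$ are monomials $u_1, \dots, u_s$ (each either $x_ix_j$ for some disjoint edge or $x_k$ for some singleton) whose supports are pairwise disjoint, since no two edges share a vertex and singletons are by construction not incident to any edge. Iterating Lemma~\ref{lem:support}, each $u_\ell$ has support disjoint from $\bigcup_{i<\ell}\supp(u_i)$, so $\overline{u_\ell}$ is a non-zero divisor on $R/(u_1,\dots,u_{\ell-1})$. Hence $u_1,\dots,u_s$ is a regular sequence and $R/I(G)$ is a complete intersection.

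For the converse, I would proceed by contrapositive. Assume $G$ is not a disjoint union of edges and singletons. Then either two distinct edges of $G$ share a vertex, or a singleton vertex $x_k$ of $G$ is incident to an edge (the latter cannot occur under our conventions, since a singleton by definition is an isolated vertex, so only the first case need be analyzed). Thus there exist distinct edges $\{x_i, x_j\}, \{x_i, x_\ell\} \in E(G)$ sharing the vertex $x_i$. The corresponding generators $u_1 = x_ix_j$ and $u_2 = x_ix_\ell$ satisfy $x_\ell \cdot u_1 = x_j \cdot u_2$, so modulo $(u_1)$ the element $\overline{x_\ell}$ annihilates $\overline{u_2}$; since $\overline{x_\ell} \notin (u_1)$, we see that $\overline{u_2}$ is a zero divisor on $R/(u_1)$. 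Because the minimal generators of a monomial complete intersection must form a regular sequence in any order (as they are all homogeneous in a graded local setting), this prevents $R/I(G)$ from being a complete intersection.

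The main technical point, and the only thing that could be considered an obstacle, is the claim that a minimal set of monomial generators forms a regular sequence precisely when they have pairwise disjoint supports. The forward implication of this claim is handled cleanly by Lemma~\ref{lem:support}, while the reverse follows from the explicit syzygy exhibited above whenever two generators share a variable. With this in hand, the lemma is immediate.
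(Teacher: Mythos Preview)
Your proposal is correct and follows essentially the same approach as the paper's proof: both directions hinge on the equivalence between the minimal monomial generators forming a regular sequence and having pairwise disjoint supports, with the backward direction handled via Lemma~\ref{lem:support} and the forward direction by exhibiting an explicit zero-divisor coming from two edges sharing a vertex. The only cosmetic difference is that the paper shows $\overline{u_1}$ is a zero-divisor modulo $(u_2,\dots,u_n)$, whereas you show $\overline{u_2}$ is a zero-divisor modulo $(u_1)$; both yield the same contradiction once one invokes (as you do explicitly, and the paper does implicitly) that the minimal generators of a homogeneous complete intersection must form a regular sequence in any order.
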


\begin{proof}
	Assume that $E(G) = \{\lst u n\}$ and that $R/I(G)$ is a complete
	intersection. Suppose to the contrary that there exists a vertex $v$ of
	degree 2 in $V(G)$. This implies that there exist
	edges $u_i,u_j \in E(G)$ such that $u_i\cap u_j = \{v\}$. 
	Assuming $i=1$ and $j=2$, we have that $\ol {u_1}\in R/(u_2,\ldots, u_n)$ is
	a zero-divisor. As such there does not exist a vertex of degree two and $G$
	must be a disjoint union of edges and singletons. 

	Assume $G$ is a disjoint union of edges as well as singletons. With out loss of
	generality, we can reduce to the case that $G$ does not contain any
	singletons. Thus, the edge ideal of $G$ is $I(G) = (x_iy_i \vl i =1,\ldots, n)$.  
	Notice that 
		\[
			\supp(x_iy_i)\cap\left[\bigcup^{i-1}_{j=1}\supp(x_jy_j)\right] =\varnothing
		\] 
		for all $i = 1,\ldots,n$. Therefore, $R/I(G)$ is a complete intersection
		by Lemma~\ref{lem:support}.
\end{proof}

\section{Classifying NCIs}\label{sec:classify}

As mentioned in the previous section, a graph $G$ is an NCI if the edge ideal $I
(G)$ is a nearly complete intersection as defined in Definition~\ref{def:nci-ideal}. The main
result, Theorem~\ref{thm:main}, gives a complete classification of NCI ideals
generated in degree at most 2 using the above graph correspondence. Before we can prove the result, we define a new 
graph operation necessary for the proof. We denote the neighbors of a vertex $v$
in $V(G)$, $\neighbors(v)$, and the induced subgraph on a subset $V' \subseteq
V(G)$ as $G[V']$.

\begin{definition}\label{def:inversion}
	The \defi{inversion} of a vertex $v$ in a graph $G$ is the graph defined by 
	\[
		\prun v G = (V',E'),
	\] 
	where $V' = V\setminus\{v\}$ and $E' = E\left(G\left[V'\setminus \mathrm N
	(v)\right]\right)$.
\end{definition}

This operation is a direct translation of the operation $I(x=1)$ used in
Definition~\ref{def:nci-ideal}~(3) and is the main tool used in the
classification of NCI graphs. With it, we can further formalize NCI graphs with
the following lemma whose proof is a direct translation of definitions.

\begin{lem}\label{lem:nci-graph}
	A graph $G$ is an NCI if and only if 
	\begin{enumerate}
		\item $G$ is not a complete intersection, and
		\item for each vertex $v \in V(G)$,	$\prun v G$ is a complete
		intersection. \end{enumerate}
\end{lem}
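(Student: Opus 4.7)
The plan is to unfold Definition~\ref{def:nci-ideal} applied to the edge ideal $I(G)$ and match each clause to a graph-theoretic statement. Since $G$ is NCI iff $I(G)$ is, Definition~\ref{def:nci-ideal} demands three things: (a) $I(G)$ is generated in degree $\geq 2$, (b) $I(G)$ is not a complete intersection, and (c) for each $x \in \supp(I(G)) = V(G)$, the ideal $I(G)(x=1)$ is a complete intersection. I would show (b)$\iff$(1), (c)$\iff$(2), and that conditions (1) and (2) together automatically force (a).

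The equivalence (b)$\iff$(1) is immediate from Lemma~\ref{lem:ci}. For (c)$\iff$(2), the core step is to compare $I(G)(x=1)$ with $I(\prun x G)$. Substituting $x \mapsto 1$ in the generators of $I(G)$ and discarding redundancies, the minimal generators of $I(G)(x=1)$ are the variables in $\neighbors(x) \cup \{\text{singletons of } G \text{ other than } x\}$ together with the edges $uw \in E(G)$ such that $u, w \notin \neighbors(x) \cup \{x\}$. By Definition~\ref{def:inversion}, $I(\prun x G)$ has exactly the same degree-two generators and all of the above degree-one generators, but may also include additional singleton generators $y$ coming from isolated vertices of $\prun x G$ whose entire $G$-neighborhood lies in $\neighbors(x) \cup \{x\}$. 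Such $y$ are variables outside $\supp(I(G)(x=1))$, so they contribute only a polynomial-ring tensor factor in the quotient and do not affect the complete intersection property. Therefore $I(G)(x=1)$ is a CI iff $I(\prun x G)$ is, and applying Lemma~\ref{lem:ci} to $\prun x G$ yields (c)$\iff$(2).

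Finally, to derive (a) from (1) and (2), suppose for contradiction that $G$ has an isolated vertex $v$. Then $\neighbors(v) = \varnothing$, so $\prun v G$ is $G$ with $v$ simply deleted. By (2) and Lemma~\ref{lem:ci}, the deletion is a disjoint union of edges and singletons; reattaching $v$ shows $G$ is too, contradicting (1). The main obstacle is the (c)$\iff$(2) bookkeeping: the two ideals $I(G)(x=1)$ and $I(\prun x G)$ are not literally equal because the inversion operation can produce extra isolated vertices absent from the substitution, and one must verify that these spurious singletons are harmless for CI-ness.
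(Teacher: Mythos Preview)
Your proposal is correct and follows the same approach the paper indicates: the paper states only that the lemma's ``proof is a direct translation of definitions'' and gives no further argument, and you have carried out exactly that translation. In fact you have been more careful than the paper, correctly isolating the one nontrivial bookkeeping point---that $I(\prun x G)$ may pick up extra singleton generators (vertices whose $G$-neighbourhood lies entirely in $\neighbors(x)$) which are absent from $I(G)(x=1)$ but, being outside its support, do not affect the CI property---and separately verifying that condition~(a) on the generating degree is forced by (1) and (2).
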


\noindent From this we have an immediate corollary. 

\begin{cor}
	NCI graphs are connected. 
\end{cor}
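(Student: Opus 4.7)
The plan is to proceed by contradiction using Lemma~\ref{lem:nci-graph} together with Lemma~\ref{lem:ci}. Suppose $G$ is an NCI graph that is disconnected, with connected components $H_1,\ldots,H_k$ where $k \geq 2$. Since $G$ is an NCI, condition (1) of Lemma~\ref{lem:nci-graph} tells us that $G$ is not a complete intersection, so by Lemma~\ref{lem:ci}, $G$ is not a disjoint union of edges and singletons. This forces at least one component, say $H_1$, to itself fail to be an edge or a singleton.

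Now I would exploit the fact that the inversion operation $\prun v G$ only deletes $v$ together with its neighbors, all of which lie in the same component as $v$. Choose any vertex $v$ in a different component, for example $v \in H_2$; such a vertex exists because $k \geq 2$. Then the vertex set of $H_1$ and all edges of $H_1$ survive untouched in $\prun v G$, so $H_1$ appears as a connected component of $\prun v G$.

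Since $H_1$ is not a single edge and not a singleton, $\prun v G$ is not a disjoint union of edges and singletons, and so by Lemma~\ref{lem:ci} it is not a complete intersection. This directly contradicts condition (2) of Lemma~\ref{lem:nci-graph}, finishing the argument. No step here looks technically hard; the main thing to get right is the observation that the inversion is a \emph{local} operation confined to the component of $v$, which is exactly what lets a ``bad'' component in another part of $G$ persist and obstruct the complete intersection property of $\prun v G$.
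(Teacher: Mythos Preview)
Your argument is correct. The paper itself does not write out a proof of this corollary; it simply states it as an immediate consequence of Lemma~\ref{lem:nci-graph}, and what you have written is exactly the natural way to make that ``immediate'' step explicit: a bad component untouched by the inversion at a vertex in another component forces $\prun v G$ to fail the complete intersection test of Lemma~\ref{lem:ci}.
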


This corollary highlights the observations in Section~4 of \cite{Boocher}. In the next example we can use Lemma~\ref{lem:nci-graph} to determine if graphs are NCI or not.

\begin{example} Here we have a graph $G$ and two inversions at the vertices $c$
and $f$. Notice that after the inversions we do not have a complete intersection
(Lemma~\ref{lem:ci}), hence $G$ is not an NCI since every inversion must be a
complete intersection (Lemma~\ref{lem:nci-graph}). 

\begin{center}
\tikzset{every picture/.style={line width=1pt}}
\begin{tikzpicture}[x=0.5pt,y=0.5pt,yscale=-1,xscale=1]
 
	\begin{scope}
		\draw    (167,192.03) -- (204.83,141.83) ;
		\draw    (167,192.03) -- (240.13,199) ; 
		\draw    (240.13,199) -- (204.83,141.83) ; 
		\draw    (204.83,141.83) -- (204.83,77.68) ; 
		\draw    (204.83,77.68) -- (246.43,47) ; 
		\draw    (204.83,77.68) -- (170,49) ; 
		\draw    (204.83,141.83) -- (254,134.85) ; 

		\draw  [color={rgb, 255:red, 0; green, 0; blue, 0 }  ,draw opacity=1 ][fill={rgb, 255:red, 0; green, 0; blue, 0 }  ,fill opacity=1 ] (207.33,141.83) .. controls (207.33,140.44) and (206.21,139.33) .. (204.83,139.33) .. controls (203.45,139.33) and (202.33,140.44) .. (202.33,141.83) .. controls (202.33,143.21) and (203.45,144.33) .. (204.83,144.33) .. controls (206.21,144.33) and (207.33,143.21) .. (207.33,141.83) -- cycle ;

		\draw  [color={rgb, 255:red, 0; green, 0; blue, 0 }  ,draw opacity=1 ][fill={rgb, 255:red, 0; green, 0; blue, 0 }  ,fill opacity=1 ] (257.01,136.79) .. controls (258.38,136.49) and (259.25,135.14) .. (258.95,133.78) .. controls (258.65,132.41) and (257.3,131.54) .. (255.94,131.84) .. controls (254.57,132.14) and (253.7,133.49) .. (254,134.85) .. controls (254.3,136.22) and (255.65,137.09) .. (257.01,136.79) -- cycle ;

		\draw  [color={rgb, 255:red, 0; green, 0; blue, 0 }  ,draw opacity=1 ][fill={rgb, 255:red, 0; green, 0; blue, 0 }  ,fill opacity=1 ] (242.63,199) .. controls (242.63,197.62) and (241.51,196.5) .. (240.13,196.5) .. controls (238.75,196.5) and (237.63,197.62) .. (237.63,199) .. controls (237.63,200.38) and (238.75,201.5) .. (240.13,201.5) .. controls (241.51,201.5) and (242.63,200.38) .. (242.63,199) -- cycle ;

		\draw  [color={rgb, 255:red, 0; green, 0; blue, 0 }  ,draw opacity=1 ][fill={rgb, 255:red, 0; green, 0; blue, 0 }  ,fill opacity=1 ] (169.5,192.03) .. controls (169.5,190.65) and (168.38,189.53) .. (167,189.53) .. controls (165.62,189.53) and (164.5,190.65) .. (164.5,192.03) .. controls (164.5,193.41) and (165.62,194.53) .. (167,194.53) .. controls (168.38,194.53) and (169.5,193.41) .. (169.5,192.03) -- cycle ;

		\draw  [color={rgb, 255:red, 0; green, 0; blue, 0 }  ,draw opacity=1 ][fill={rgb, 255:red, 0; green, 0; blue, 0 }  ,fill opacity=1 ] (248.93,47) .. controls (248.93,45.62) and (247.82,44.5) .. (246.43,44.5) .. controls (245.05,44.5) and (243.93,45.62) .. (243.93,47) .. controls (243.93,48.38) and (245.05,49.5) .. (246.43,49.5) .. controls (247.82,49.5) and (248.93,48.38) .. (248.93,47) -- cycle ;

		\draw  [color={rgb, 255:red, 0; green, 0; blue, 0 }  ,draw opacity=1 ][fill={rgb, 255:red, 0; green, 0; blue, 0 }  ,fill opacity=1 ] (172.5,49) .. controls (172.5,47.62) and (171.38,46.5) .. (170,46.5) .. controls (168.62,46.5) and (167.5,47.62) .. (167.5,49) .. controls (167.5,50.38) and (168.62,51.5) .. (170,51.5) .. controls (171.38,51.5) and (172.5,50.38) .. (172.5,49) -- cycle ;

		\draw  [color={rgb, 255:red, 0; green, 0; blue, 0 }  ,draw opacity=1 ][fill={rgb, 255:red, 0; green, 0; blue, 0 }  ,fill opacity=1 ] (207.33,76.18) .. controls (207.33,74.8) and (206.21,73.68) .. (204.83,73.68) .. controls (203.45,73.68) and (202.33,74.8) .. (202.33,76.18) .. controls (202.33,77.56) and (203.45,78.68) .. (204.83,78.68) .. controls (206.21,78.68) and (207.33,77.56) .. (207.33,76.18) -- cycle ;

		\node at (205, 230) {\Large$G$};
		\node at (172.5,38) {$a$};
		\node at (205,65) {$b$};
		\node at (245,38) {$c$}; 
		\node at (195,138) {$d$};
		\node at (257,125) {$e$};
		\node at (157,190) {$f$}; 
		\node at (250,199) {$g$};

	\end{scope}

	\begin{scope}[xshift = 125]

		\draw    (167,192.03) -- (204.83,141.83) ;
		\draw    (167,192.03) -- (240.13,199) ; 
		\draw    (240.13,199) -- (204.83,141.83) ; 
		 
		\draw    (204.83,141.83) -- (254,134.85) ; 

		\draw  [color={rgb, 255:red, 0; green, 0; blue, 0 }  ,draw opacity=1 ][fill={rgb, 255:red, 0; green, 0; blue, 0 }  ,fill opacity=1 ] (207.33,141.83) .. controls (207.33,140.44) and (206.21,139.33) .. (204.83,139.33) .. controls (203.45,139.33) and (202.33,140.44) .. (202.33,141.83) .. controls (202.33,143.21) and (203.45,144.33) .. (204.83,144.33) .. controls (206.21,144.33) and (207.33,143.21) .. (207.33,141.83) -- cycle ;

		\draw  [color={rgb, 255:red, 0; green, 0; blue, 0 }  ,draw opacity=1 ][fill={rgb, 255:red, 0; green, 0; blue, 0 }  ,fill opacity=1 ] (257.01,136.79) .. controls (258.38,136.49) and (259.25,135.14) .. (258.95,133.78) .. controls (258.65,132.41) and (257.3,131.54) .. (255.94,131.84) .. controls (254.57,132.14) and (253.7,133.49) .. (254,134.85) .. controls (254.3,136.22) and (255.65,137.09) .. (257.01,136.79) -- cycle ;

		\draw  [color={rgb, 255:red, 0; green, 0; blue, 0 }  ,draw opacity=1 ][fill={rgb, 255:red, 0; green, 0; blue, 0 }  ,fill opacity=1 ] (242.63,199) .. controls (242.63,197.62) and (241.51,196.5) .. (240.13,196.5) .. controls (238.75,196.5) and (237.63,197.62) .. (237.63,199) .. controls (237.63,200.38) and (238.75,201.5) .. (240.13,201.5) .. controls (241.51,201.5) and (242.63,200.38) .. (242.63,199) -- cycle ;

		\draw  [color={rgb, 255:red, 0; green, 0; blue, 0 }  ,draw opacity=1 ][fill={rgb, 255:red, 0; green, 0; blue, 0 }  ,fill opacity=1 ] (169.5,192.03) .. controls (169.5,190.65) and (168.38,189.53) .. (167,189.53) .. controls (165.62,189.53) and (164.5,190.65) .. (164.5,192.03) .. controls (164.5,193.41) and (165.62,194.53) .. (167,194.53) .. controls (168.38,194.53) and (169.5,193.41) .. (169.5,192.03) -- cycle ;

		\draw  [color={rgb, 255:red, 0; green, 0; blue, 0 }  ,draw opacity=1 ][fill={rgb, 255:red, 0; green, 0; blue, 0 }  ,fill opacity=1 ] (172.5,49) .. controls (172.5,47.62) and (171.38,46.5) .. (170,46.5) .. controls (168.62,46.5) and (167.5,47.62) .. (167.5,49) .. controls (167.5,50.38) and (168.62,51.5) .. (170,51.5) .. controls (171.38,51.5) and (172.5,50.38) .. (172.5,49) -- cycle ;

		\draw  [color={rgb, 255:red, 0; green, 0; blue, 0 }  ,draw opacity=1 ][fill={rgb, 255:red, 0; green, 0; blue, 0 }  ,fill opacity=1 ] (207.33,76.18) .. controls (207.33,74.8) and (206.21,73.68) .. (204.83,73.68) .. controls (203.45,73.68) and (202.33,74.8) .. (202.33,76.18) .. controls (202.33,77.56) and (203.45,78.68) .. (204.83,78.68) .. controls (206.21,78.68) and (207.33,77.56) .. (207.33,76.18) -- cycle ;

		\node at (205, 230) {\Large$\prun c G$};
		\node at (172.5,38) {$a$};
		\node at (205,65) {$b$}; 
		\node at (195,138) {$d$};
		\node at (257,125) {$e$};
		\node at (157,190) {$f$}; 
		\node at (250,199) {$g$};

	\end{scope}

	\begin{scope}[xshift=250]
		\draw    (204.83,77.68) -- (246.43,47) ; 
		\draw    (204.83,77.68) -- (170,49) ; 

		\draw  [color={rgb, 255:red, 0; green, 0; blue, 0 }  ,draw opacity=1 ][fill={rgb, 255:red, 0; green, 0; blue, 0 }  ,fill opacity=1 ] (207.33,141.83) .. controls (207.33,140.44) and (206.21,139.33) .. (204.83,139.33) .. controls (203.45,139.33) and (202.33,140.44) .. (202.33,141.83) .. controls (202.33,143.21) and (203.45,144.33) .. (204.83,144.33) .. controls (206.21,144.33) and (207.33,143.21) .. (207.33,141.83) -- cycle ;

		\draw  [color={rgb, 255:red, 0; green, 0; blue, 0 }  ,draw opacity=1 ][fill={rgb, 255:red, 0; green, 0; blue, 0 }  ,fill opacity=1 ] (257.01,136.79) .. controls (258.38,136.49) and (259.25,135.14) .. (258.95,133.78) .. controls (258.65,132.41) and (257.3,131.54) .. (255.94,131.84) .. controls (254.57,132.14) and (253.7,133.49) .. (254,134.85) .. controls (254.3,136.22) and (255.65,137.09) .. (257.01,136.79) -- cycle ;

		\draw  [color={rgb, 255:red, 0; green, 0; blue, 0 }  ,draw opacity=1 ][fill={rgb, 255:red, 0; green, 0; blue, 0 }  ,fill opacity=1 ] (242.63,199) .. controls (242.63,197.62) and (241.51,196.5) .. (240.13,196.5) .. controls (238.75,196.5) and (237.63,197.62) .. (237.63,199) .. controls (237.63,200.38) and (238.75,201.5) .. (240.13,201.5) .. controls (241.51,201.5) and (242.63,200.38) .. (242.63,199) -- cycle ;

		\draw  [color={rgb, 255:red, 0; green, 0; blue, 0 }  ,draw opacity=1 ][fill={rgb, 255:red, 0; green, 0; blue, 0 }  ,fill opacity=1 ] (248.93,47) .. controls (248.93,45.62) and (247.82,44.5) .. (246.43,44.5) .. controls (245.05,44.5) and (243.93,45.62) .. (243.93,47) .. controls (243.93,48.38) and (245.05,49.5) .. (246.43,49.5) .. controls (247.82,49.5) and (248.93,48.38) .. (248.93,47) -- cycle ;

		\draw  [color={rgb, 255:red, 0; green, 0; blue, 0 }  ,draw opacity=1 ][fill={rgb, 255:red, 0; green, 0; blue, 0 }  ,fill opacity=1 ] (172.5,49) .. controls (172.5,47.62) and (171.38,46.5) .. (170,46.5) .. controls (168.62,46.5) and (167.5,47.62) .. (167.5,49) .. controls (167.5,50.38) and (168.62,51.5) .. (170,51.5) .. controls (171.38,51.5) and (172.5,50.38) .. (172.5,49) -- cycle ;

		\draw  [color={rgb, 255:red, 0; green, 0; blue, 0 }  ,draw opacity=1 ][fill={rgb, 255:red, 0; green, 0; blue, 0 }  ,fill opacity=1 ] (207.33,76.18) .. controls (207.33,74.8) and (206.21,73.68) .. (204.83,73.68) .. controls (203.45,73.68) and (202.33,74.8) .. (202.33,76.18) .. controls (202.33,77.56) and (203.45,78.68) .. (204.83,78.68) .. controls (206.21,78.68) and (207.33,77.56) .. (207.33,76.18) -- cycle ;

		\node at (205, 230) {\Large$\prun f G$};
		\node at (172.5,38) {$a$};
		\node at (205,65) {$b$};
		\node at (245,38) {$c$}; 
		\node at (195,138) {$d$};
		\node at (257,125) {$e$};
		\node at (250,199) {$g$};
	\end{scope}

\end{tikzpicture}
\end{center}

Above shows that not all graphs are NCI. In fact the NCI property seems to be quite
rare. Below are examples of graphs that are NCI. Notice that any inversion of a
vertex will create a disjoint union of edges and singletons, i.e. a complete
intersection. Applying Lemma~\ref{lem:nci-graph} shows they are NCI.

\begin{center}

\begin{tikzpicture}
	\begin{scope}[xshift=1cm]
			\edge {-5} 0 {-4} 0
			\edge {-4} 0 {-3} 0
	\end{scope}

\begin{scope}[yshift= -1cm]
	\edge 0 0 2 0
	\edge 0 2 2 2
	\edge 0 0 0 2
	\edge 2 0 2 2

	\node at (1, -.5) {\Large$C_4$};

\end{scope}

\begin{scope}[xshift=5cm]
    \foreach \a in { 18, 90, 162, 234, 306 } {
      \foreach \b in { 18, 90, 162, 234, 306 } {
        \drawPolarLinewithBG{\a:1}{\b:1};
      }
    }
 
    \foreach \a in {18,90, 162, 234, 306 } {
      \node at (\a:1cm) [circle,fill=black,inner sep=0pt, minimum size=5pt] {};
    }
    \node at (0,-1.5) {\Large$K_5$};
    \node at (-4, -1.5) {\Large$C_4$};
    \node at (-8,-1.5) {\Large$P_3$};
\end{scope}

\begin{scope}[xshift=9cm, yshift=.1]
	\node[circle,fill=black,inner sep=0pt, minimum size=5pt] at (360:0mm) (center) {};
\foreach \n in {1,...,7}{
    \node[circle,fill=black,inner sep=0pt, minimum size=5pt] at ({\n*360/7}:1cm) (n\n) {};
    \draw[black,thick] (center)--(n\n);
    \node at (0,-1.5) {\Large$S_{7}$}; 
}
\end{scope}
\end{tikzpicture}

\end{center}
It's natural to look at the families these graphs belong to. For example, the
family of paths are not all NCI. Indeed, if $n>4$, then the path $P_n$ is not an NCI. To
see this one only needs to invert a leaf of the graph and notice the resulting
graph is not a complete intersection, but another path connecting at least three
vertices. A similar result/argument holds for cycles, i.e. if $n>5$, then a
cycle $C_n$ is not an NCI. However, this is not the case for complete graphs. 
\end{example}

\begin{prop}
	Any complete graph with more than 2 vertices is an NCI.
	\end{prop}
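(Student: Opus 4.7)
The plan is to apply Lemma~\ref{lem:nci-graph} directly, so I need to verify the two bullet points of that lemma for $G = K_n$ with $n > 2$.

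First I would check that $K_n$ is itself not a complete intersection. Since $n > 2$, every vertex has degree $n-1 \geq 2$, so $K_n$ is not a disjoint union of edges and singletons, and Lemma~\ref{lem:ci} immediately rules it out. This takes care of condition (1) of Lemma~\ref{lem:nci-graph}.

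Next I would analyze $\prun v {K_n}$ for an arbitrary vertex $v \in V(K_n)$. The key observation is that in $K_n$, every other vertex is adjacent to $v$, so $\mathrm N(v) = V(K_n) \setminus \{v\}$. Unpacking Definition~\ref{def:inversion}, we have $V' = V(K_n) \setminus \{v\}$ but $V' \setminus \mathrm N(v) = \varnothing$, so $E' = E(G[\varnothing]) = \varnothing$. Thus $\prun v {K_n}$ is a graph on $n-1$ vertices with no edges, i.e. a disjoint union of singletons. By Lemma~\ref{lem:ci} this is a complete intersection, verifying condition (2) of Lemma~\ref{lem:nci-graph}.

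Combining these two observations, Lemma~\ref{lem:nci-graph} gives that $K_n$ is an NCI. There is really no main obstacle here: the argument is a short, direct unwinding of the inversion definition once one notices that $\mathrm N(v)$ exhausts all other vertices in a complete graph. The only small point requiring care is confirming that the empty edge set does qualify as a complete intersection under the convention in Lemma~\ref{lem:ci}, which it does vacuously as a disjoint union of zero edges together with $n-1$ singletons.
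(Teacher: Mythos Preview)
Your proof is correct and follows essentially the same route as the paper: both arguments observe that in $K_n$ the neighborhood $\mathrm N(v)$ exhausts $V(K_n)\setminus\{v\}$, so the inversion has empty edge set and is a collection of singletons, hence a complete intersection by Lemma~\ref{lem:ci}. You are in fact slightly more thorough than the paper, since you explicitly verify condition~(1) of Lemma~\ref{lem:nci-graph} (that $K_n$ itself is not a complete intersection), which the paper leaves implicit.
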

	\begin{proof}
	Let $G = K_n$ be a complete graph on $n\geq 3$ vertices. If $v \in V = V
	(G)$,
	the inversion of $v$ is given by 
	\[
		\prun v G = \left( V', E\left( G \left[ V' \setminus \neighbors(v) \right] \right) \right),
	\] 
	where $V' = \V$. As $G$ is complete, we have that $N(G) = \V = V'$, and
	hence 
	\[
		E\left( G \left[ V' \setminus \neighbors(v) \right] \right) =\emptyset. 
	\]
	Thus $\prun v G$ is a collection of singletons and hence a complete
	intersection by Lemma~\ref{lem:ci}. 
\end{proof} 

In the above path and cycle examples, we saw that the threshold for a graph to be NCI was having $|V(G)| \leqslant 4$ and 5 respectively. It turns out that we can explicitly state the NCI property for connected graphs with at most 4 vertices.

\begin{prop}\label{prop:bound}
	Let $G$ be a connected graph.
	\begin{enumerate}
	 	\item If $|V(G)| \leqslant 2$, then $G$ is not an NCI.
	 	\item If $|V(G)| = 3 \text{ or } 4$, then $G$ is an NCI. 
	 \end{enumerate} 
\end{prop}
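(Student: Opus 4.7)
The plan is to verify both parts using Lemma~\ref{lem:nci-graph} (for the characterization of NCI graphs) together with Lemma~\ref{lem:ci} (for the test of complete intersection).

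For (1), I would argue directly from the definitions. If $|V(G)| = 1$ then $G$ is a single singleton vertex, so $I(G) = (x)$ is generated in degree one, violating condition~(1) of Definition~\ref{def:nci-ideal}. If $|V(G)| = 2$ and $G$ is connected, then $G$ consists of a single edge and $I(G) = (xy)$ is principal, hence a complete intersection by Lemma~\ref{lem:ci}, contradicting condition~(2) of Definition~\ref{def:nci-ideal}. In either case $G$ is not an NCI.

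For (2), set $n = |V(G)| \in \{3,4\}$. I would check the two conditions of Lemma~\ref{lem:nci-graph} in turn. First, since $G$ is connected on $n \geq 3$ vertices, a degree-sum argument on any spanning tree (total degree $2(n-1) > n$) forces some vertex to have degree at least two, so Lemma~\ref{lem:ci} shows $G$ is not a complete intersection. Next, fix an arbitrary $v \in V(G)$. The inversion $\prun v G$ has vertex set $V' = V(G) \setminus \{v\}$ and edge set $E(G[V' \setminus N(v)])$. The key quantitative observation will be the bound
\[
  \bigl|V' \setminus N(v)\bigr| \;=\; n - 1 - |N(v)| \;\leq\; n - 2 \;\leq\; 2,
\]
which uses connectedness of $G$ to guarantee $|N(v)| \geq 1$. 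Consequently $G[V' \setminus N(v)]$ has at most two vertices and therefore at most one edge. Every vertex of $N(v)$ sits in $V'$ but cannot participate in an edge of $\prun v G$, so it contributes a singleton; together with any remaining vertex in $V' \setminus N(v)$ (forming either a lone edge or additional singletons), the inversion $\prun v G$ is a disjoint union of edges and singletons. Lemma~\ref{lem:ci} then yields that $\prun v G$ is a complete intersection, and Lemma~\ref{lem:nci-graph} completes the proof.

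The main subtlety---not really an obstacle---is the bookkeeping in~(2) of which vertices of $\prun v G$ become singletons versus parts of an edge, so that Lemma~\ref{lem:ci} applies cleanly. Once the size bound on $V' \setminus N(v)$ is in hand, nothing else is delicate.
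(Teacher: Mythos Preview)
Your proposal is correct and follows essentially the same approach as the paper's proof: both parts reduce to Lemma~\ref{lem:ci} and Lemma~\ref{lem:nci-graph}, with the key observation in (2) being that connectedness forces $|N(v)|\geq 1$, so $\prun v G$ can carry at most one edge. Your argument is simply a more explicit version of the paper's terse ``$\prun v G$ has at most one edge and is a complete intersection,'' with the only cosmetic difference being that for $|V(G)|=1$ you invoke the degree condition of Definition~\ref{def:nci-ideal} directly, whereas the paper notes this case is already a complete intersection.
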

\begin{proof}
	When $|V(G)| \leqslant 2$ the graph is a complete intersection by Lemma~\ref{lem:ci} and hence cannot be an NCI by Lemma~\ref{lem:nci-graph}. When $|V(G)| = 3 \text{ or } 4$, $G$ cannot be a complete intersection due to the connected assumption, i.e. any vertex $v \in V(G)$ must be connected to at least one other vertex. Thus $\prun v G$ has at most one edge and is a complete intersection. This forces $G$ to be an NCI. 
\end{proof}

We are now ready to prove the main classification theorem. In the theorem, we
define the graph $T$ as the following. 

\begin{center}
		\begin{tikzpicture}
			\begin{scope}
				\node (T) at (.5, .75) {$T$};
				\node (v) at (-.25, 0) {$v_1$};
 				\node (a) at (0,0) [circle,fill=black,inner sep=0pt, minimum size=3pt] {};
				\node (b) at (.5,0) [circle,fill=black,inner sep=0pt, minimum size=3pt] {};
				\node (c) at (1,0) [circle,fill=black,inner sep=0pt, minimum size=3pt] {};
				\node (d) at (1.1, .5) [circle,fill=black,inner sep=0pt, minimum size=3pt] {};
				\node (e) at (1.1, -.5) [circle,fill=black,inner sep=0pt, minimum size=3pt] {};

				\draw (a) -- (b) -- (c);
				\draw (d) -- (c) -- (e);
			\end{scope}


		\end{tikzpicture}
\end{center}
This graph, along with $P_5$, become the major obstructions to the NCI property.

\begin{thm}\label{thm:main}
	Let $G$ be a connected graph with $|V(G)|\geqslant 5$. The graph $G$ is not an NCI if and only if there exist vertices $v_1, v_2, v_3, v_4, v_5 \in V(G)$ such that the following conditions hold:\\
	\begin{enumerate}
		\item the vertex $v_1$ is a leaf in $G[v_1, v_2, v_3, v_4, v_5]$;
		\item the path $P_5$ or $T$ is a spanning tree of $G[v_1, v_2, v_3, v_4,
		v_5]$ where the neighbors of $v_1$ all have degree 2 in the spanning tree.
	\end{enumerate}
\end{thm}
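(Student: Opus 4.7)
My plan is to prove both directions of the bi-implication via Lemmas~\ref{lem:ci} and~\ref{lem:nci-graph}. A first observation: because $G$ is connected with $|V(G)|\geq 5$, Lemma~\ref{lem:ci} rules out $G$ itself being a complete intersection, so Lemma~\ref{lem:nci-graph} reduces ``$G$ is not NCI'' to the condition that $\prun{w}{G}$ fails to be a complete intersection for some $w\in V(G)$; equivalently, that $\prun{w}{G}$ contains a vertex of degree $\geq 2$.

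For the backward direction, I would invert at $v_1$. Since $v_1$ is a leaf of $G[v_1,\ldots,v_5]$, its unique neighbor in the 5-set is the tree-degree-$2$ vertex $v_2$, hence $v_3,v_4,v_5\notin\neighbors(v_1)\cup\{v_1\}$. Thus $v_3,v_4,v_5$ persist as vertices of $\prun{v_1}{G}$ with every $G$-edge between them still present. In the $P_5$-tree case the tree edges $v_3v_4$ and $v_4v_5$ force $v_4$ to have degree $\geq 2$ in $\prun{v_1}{G}$; in the $T$-tree case the tree edges $v_3v_4$ and $v_3v_5$ force $v_3$ to have degree $\geq 2$. By Lemma~\ref{lem:ci} this prevents $\prun{v_1}{G}$ from being a complete intersection, and Lemma~\ref{lem:nci-graph} then yields that $G$ is not NCI.

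For the forward direction, I would first pick $v$ with $\prun{v}{G}$ not a complete intersection, then choose a vertex $u$ of degree $\geq 2$ in $\prun{v}{G}$ together with two such neighbors $w_1,w_2$; by the definition of inversion, $\{u,w_1,w_2\}\cap(\neighbors(v)\cup\{v\})=\varnothing$ and $uw_1,uw_2\in E(G)$. I would then split into three cases based on distances in $G$, exploiting that shortest paths are chordless. Case (i), $d_G(v,u)=2$ via $v{-}p{-}u$: set $(v_1,\ldots,v_5):=(v,p,u,w_1,w_2)$ and read off $T$ from the edges $v_1v_2,v_2v_3,v_3v_4,v_3v_5$. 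Case (ii), $d_G(v,u)\geq 3$ but $d_G(v,w_i)=2$ for some $i$ (WLOG $i=1$) via $v{-}p{-}w_1$: set $(v,p,w_1,u,w_2)$ and read off $P_5$ from $v_1v_2,v_2v_3,v_3v_4,v_4v_5$. Case (iii), $\min(d_G(v,u),d_G(v,w_1),d_G(v,w_2))\geq 3$: take a shortest $v$-$u$ path of length $k\geq 3$; if $k\geq 4$ the first five vertices form an induced $P_5$, and if $k=3$ I adjoin $w_1$ as $v_5$ (with distinctness from each $p_i$ guaranteed by $d_G(v,w_1)\geq 3$ and $w_1\notin\neighbors(v)$), again yielding $P_5$ via $v_1v_2,v_2v_3,v_3v_4,v_4v_5$. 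In each case $v_1=v$ is a leaf of $G[v_1,\ldots,v_5]$ because $v_3,v_4,v_5\notin\neighbors(v)$, and the designated neighbor $v_2$ has degree $2$ in the constructed tree.

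The main obstacle is the bookkeeping in case (iii) with $k=3$: I must check $w_1\notin\{p_0,p_1,p_2,p_3\}$ and rule out the putative chord edges $p_1u$ and $p_1w_1$, both via the minimality of $k$. Any additional incidental edges in the induced 5-vertex subgraph (such as $p_2w_1$ here, or $pw_2$ and $w_1w_2$ in case (i)) are harmless, since the theorem requires $P_5$ or $T$ to be merely \emph{a} spanning tree of $G[v_1,\ldots,v_5]$, and the leaf condition at $v_1$ is already secured by $v_3,v_4,v_5\notin\neighbors(v)$.
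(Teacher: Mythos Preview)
Your proof is correct and follows essentially the same strategy as the paper's: the backward direction is identical (invert at $v_1$ and exhibit a vertex of degree $\geq 2$ in $\prun{v_1}{G}$), and the forward direction likewise starts from a vertex $v$ whose inversion fails, locates a degree-$\geq 2$ vertex $u$ with two neighbors in $V'\setminus\neighbors(v)$, and reads off $T$ or $P_5$ from a path joining $v$ to that configuration. Your three-case distance-based split is just a finer reorganization of the paper's two cases (their Case~I is your case~(i), their Case~II is your cases~(ii)--(iii)), and your explicit $k=3$ bookkeeping in case~(iii) makes precise the step the paper phrases informally as ``reduce to the case where there is a single vertex between them.''
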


\begin{proof}
	Assume $G$ is not an NCI. As such, there exists $v \in V(G)$ such that $\prun v G$ is not a complete intersection. In particular, $\prun v G$ has a vertex $w \in V' = \V$ of degree two. As $G$ is connected, there must exist a path from $v$ to $w$ that passes through the neighbors of $v$ in $G$. So there exists $v_2 \in \neighbors_G(v)$ such that the path 
	\begin{equation}\label{eq:path}
		v \longrightarrow v_2 \longrightarrow v_3 \longrightarrow \cdots \longrightarrow w
	\end{equation}
	exists in $G$. Without losing generality, we can assume the vertices in the path from $v_3$ to $w$ (inclusive) avoid $\neighbors_G(v)$. Indeed if there was a vertex $u \in \neighbors_G(v)$ between $v_3$ and $w$, we could replace $v_2$ with $u$, shortening the path. As such, we may assume the path from $v_3$ to $w$ is completely contained in the subgraph $G[V' \setminus \neighbors_G(v)] \subset G$.  We now consider two cases, $v_3 = w$ and $v_3\not= w$, which can be visualized in the following abstract representation of $G$.

	\begin{center}
	\begin{tikzpicture}
	
		\node (a) at (0,0) [circle,fill=black,inner sep=0pt, minimum size=5pt] {};
		\node (A) at (-.25, 0) {$v$};
		\node (b) at (1,0) [circle,fill=red,inner sep=0pt, minimum size=5pt] {};
		\node (B) at (1,-.25) {$v_2$};
		\node (c) at (1,.75) [circle,fill=red,inner sep=0pt, minimum size=5pt] {};
		\node (d) at (1, -.75) [circle,fill=red,inner sep=0pt, minimum size=5pt] {};

		\node (e) at (3, 0) [circle,fill=black,inner sep=0pt, minimum size=5pt] {};
		\node (E) at (2.9, -.25) {$v_3$};

		\node (g) at (4, 1) [circle,fill=black,inner sep=0pt, minimum size=5pt] {};
		\node (h) at (4.5, 1) [circle,fill=black,inner sep=0pt, minimum size=5pt] {};
		\node (H) at (4.6, 1.25) {$w'$};
		\node (i) at (5, .5) [circle,fill=black,inner sep=0pt, minimum size=5pt] {};
		\node (I) at (4.8, .25) {$w$};
		\node (j) at (5.2, -.5) [circle,fill=black,inner sep=0pt, minimum size=5pt] {};
		\node (J) at (4.9, -.5) {$w''$};		

		\node (k) at (5.5, -1) {};
		\node (o) at (5.5, 0) {};
		\node (p) at (5.75, -.5) {};

		\node (l) at (5.5, .25) {};
		\node (m) at (5.5,.75) {};
		\node (n) at (5, 1)  {};

		\draw (a) -- (b);
		\draw (a) -- (c);
		\draw (a) -- (d);
		\node (xx) at (1, 1.2) {$\neighbors_G(v)$};

		\draw[dotted] (c) to[out=20,in=0] (d);
		\draw[dotted] (c) to[out=5,in=60] (b);
		\draw[dotted] (b) -- (e);
		\draw[dashed, purple] (e) -- node[anchor=east] {II} (g);
		\draw[dashed, purple] (g) -- (h);		
		\draw (h) -- (i) -- (j);

		\draw (j) -- (k);
		\draw (j) -- (o);
		\draw (j) -- (p);

		\draw (i) -- (l);
		\draw (i) -- (m);
		\draw (i) -- (n);

		\draw[dashed, blue] (e) -- node[anchor=north] {I} (i);
		\draw[fill=pink, fill opacity = .2] plot [smooth cycle] coordinates {(2.5,1) (3, -1) (5.5, -1) (6, 1) (4, 1.5)};
		\node (zz) at (5,2) {$G[V'-\neighbors_G(v)]$};
	\end{tikzpicture}
\end{center}

\paragraph{\it Case I} Assume $v_3 = w$. Since $w$ is a degree two vertex in $\prun v G$, there exist $w', w''\in V'$ such that $ww', ww'' \in E' = E\left(G\left[V'\setminus \mathrm \neighbors_G(v)\right]\right)$. In particular $w',w'' \notin\neighbors_G(v)$. As such, $v$ is a leaf in the induced subgraph $H = G[v,v_2,w,w',w'']$, and by construction, $T$ is a spanning tree of $H$ where $v_2$ is the only neighbor of $v$. Further, the degree of $v_2$ is two in the spanning tree $T$, thus both of the desired conditions are satisfied. 

\paragraph{\it Case II} Assume $v_3 \not= w$. As $v_3$ and $w$ are distinct, we can reduce to the case where there is a single vertex between them on the path \eqref{eq:path}, say $w'$. As $v_3, w', w \notin \neighbors_G(v)$, we have that $v$ is a leaf in the induced subgraph $H = G[v, v_2, v_3, w', w]$. In this case, by construction, $P_5$ is a spanning tree of $H$ where $v_2$ is the only neighbor of $v$. As the degree of $v_2$ is two in the spanning tree $P_5$, we have our desired result. 

Conversely, assume the conditions hold for a graph $G$ that is NCI. In this scenario, there exists vertices $v_1, v_2, v_3, v_4, v_5 \in V(G)$ such that $v_1$ is a leaf in the induced subgraph $G[v_1, v_2, v_3, v_4, v_5]$. In the situation where $P_5$ is a spanning tree of $G[v_1, v_2, v_3, v_4, v_5]$, we can assume the vertex labels of the path are as follows.
	\begin{center}
		\begin{tikzpicture}
			\begin{scope}

				\node (P) at (-1,0) {$P_5$:};
				\node (a) at (0,0) [circle,fill=black,inner sep=0pt, minimum size=3pt] {};
				\node (b) at (0.5,0) [circle,fill=black,inner sep=0pt, minimum size=3pt] {};
				\node (c) at (1, -.25) [circle,fill=black,inner sep=0pt, minimum size=3pt] {};
				\node (d) at (1.5, -.5) [circle,fill=black,inner sep=0pt, minimum size=3pt] {};
				\node (e) at (2, -.25) [circle,fill=black,inner sep=0pt, minimum size=3pt] {};
				\node (aa) at (0,.25) {$v_1$};
				\node (aa) at (0.5,.25) {$v_2$};
				\node (aa) at (1,0) {$v_3$};
				\node (aa) at (1.5,-.25) {$v_4$};
				\node (aa) at (2,0) {$v_5$};																

				\draw (a) -- (b) -- (c) -- (d) -- (e);
			\end{scope}
		\end{tikzpicture}
	\end{center}
Since $v_1$ is a leaf in the induced subgraph, we know that $v_3, v_4, v_5 \notin \neighbors_G(v)$. Hence the degree of $v_4$ is at least two in $\prun{v_1}{G}$. This shows that $\prun{v_1}{G}$ is not a complete intersection, a contradiction of Lemma~\ref{lem:nci-graph}. A similar argument holds for when $T$ is a spanning tree of $G[v_1, v_2, v_3, v_4, v_5]$.
\end{proof}

Theorem~\ref{thm:main}, together with Proposition~\ref{prop:bound} give a complete classification of NCI graphs. As a result, we have a graph theoretic classification of NCI ideals generated in degree two. A natural desire is to extend this result to NCIs with generators in higher degrees. One direction to consider is classifying these ideals with {\it hypergraphs}. A \defi{Hypergraph} is a pair $G = (V,E)$ where $V$ is the set of vertices of $G$ and the set of edges $E$ is a set of nonempty subsets of $V$. In this scenario, more than two vertices can be incident to a single edge. As with graphs, a similar correspondence exits between hypergraphs and ideals and can be seen in the following example.

\begin{example} The left image below is an example of an NCI hypergraph $G$ on a vertex set $V(G) = \{a, b, c, d, e, f, g\}$. Notice this hypergraph has three edges, $\{a,b,c\}$, $\{g\}$, and $\{d,e,f\}$. Further, there is a natural correspondence between these hypergraphs and monomial ideals in $\kk[a,b,c,d,e,f,g]$. In particular $I(G)$ is listed below $G$.
	
\begin{center}
	\begin{tikzpicture}
	\begin{scope}
		\draw[fill=pink, fill opacity = .2] plot [smooth cycle] coordinates {(0,0) (3, 0) (3, -1) (0, -1)};


		\draw[fill=blue, fill opacity = .2] plot [smooth cycle] coordinates {(0,-3) (3, -3) (3, -4) (0, -4)};


		\node (a) at (.5, -.5) [circle,inner sep=0pt, minimum size=5pt] {$a$};
		\node (b) at (1.5, -.5) [circle,inner sep=0pt, minimum size=5pt] {$b$};
		\node (c) at (2.5, -.5) [circle,inner sep=0pt, minimum size=5pt] {$c$};			
		\node (g) at (1.5, -2) [circle,inner sep=0pt, minimum size=5pt] {$g$};								
		\node (d) at (.5, -3.5) [circle,inner sep=0pt, minimum size=5pt] {$d$};
		\node (e) at (1.5, -3.5) [circle,inner sep=0pt, minimum size=5pt] {$e$};
		\node (f) at (2.5, -3.5) [circle,inner sep=0pt, minimum size=5pt] {$f$};						
		\draw (a) -- (g);
		\draw (b) -- (g);
		\draw (c) -- (g);

		\draw (d) -- (g);
		\draw (e) -- (g);
		\draw (f) -- (g);

		\node at (1.5,-4.5) {$I(G) = (abc,def,ag,bg,cg,dg,eg,fg)$};
	\end{scope}
	\begin{scope}[xshift=6cm]										
		\draw[fill=pink, fill opacity = .2] plot [smooth cycle] coordinates {(0,0) (3, 0) (3, -1) (0, -1)};


		\draw[fill=blue, fill opacity = .2] plot [smooth cycle] coordinates {(0,-3) (3, -3) (3, -4) (0, -4)};


		\node (b) at (1.5, -.5) [circle,inner sep=0pt, minimum size=5pt] {$b$};
		\node (c) at (2.5, -.5) [circle,inner sep=0pt, minimum size=5pt] {$c$};			
		\node (g) at (1.5, -2) [circle,inner sep=0pt, minimum size=5pt] {$g$};								
		\node (d) at (.5, -3.5) [circle,inner sep=0pt, minimum size=5pt] {$d$};
		\node (e) at (1.5, -3.5) [circle,inner sep=0pt, minimum size=5pt] {$e$};
		\node (f) at (2.5, -3.5) [circle,inner sep=0pt, minimum size=5pt] {$f$};						


		\node at (1.5,-4.5) {$\prun a G$ is CI};		
	\end{scope}	
	\end{tikzpicture}
\end{center}	

Lemma~\ref{lem:nci-graph} can also be extended to this scenario as well as the definition of inversion. Notice that inverting $a$ (or any vertex) will produce the complete intersection on the right. It is worth noting that all the examples of NCI hypergraphs we were able to construct were related to the above example. This hints at the possibility that all higher degree NCI ideals are related to the above hypergraph. 
\end{example}

We end this section with an observation relating to the original result of \cite{Boocher}. Let $I$ be a height $c$ monomial ideal in a polynomial ring $S$ that is not a complete intersection. A. Boocher and J. Seiner show that $\sum \beta_i(S/I) \geqslant 2^c + 2^{c-1}$. In particular, equality holds if and only if the generating function for $\beta_i(S/I)$ is either 
\[
	(1+3t+2t^2)(1+t)^{c-2} \text{ or } (1+5t+5t^2+t^3)(1+t)^{c-3}.
\]
When $c = 2$ or $3$, respectively, the generating functions are defined by ideals with the betti sequence $\{1,3,2\}$ and $\{1,5,5,1\}$, respectively. We are able to retrieve these sequences from the obstructions noted in Theorem~\ref{thm:main}. Indeed, the edge ideal $I(P_5)$ and $I(T)$ both have the betti sequence $\{1,4,4,1\}$. However, if we connect the end points of the path $P_5$ to create a 5-cycle, the betti sequence becomes $\{1,5,5,1\}$. Similarly, removing a leaf of either $P_5$ or $T$ can create the path $P_4$, obtaining the betti sequence $\{1,3,2\}$.


\end{document}